\documentclass[reqno]{amsart}
\usepackage{amssymb,latexsym}
\usepackage{amsmath}
\usepackage{amsthm}
\usepackage{graphicx}
\usepackage{hyperref}
\usepackage{titletoc}
\numberwithin{equation}{section}
%\newcounter{mnotecount}[section]
\newtheorem{theorem}{Theorem}[section]
\newtheorem{prop}[theorem]{Proposition}
\newtheorem{lemma}[theorem]{Lemma}

\theoremstyle{definition}

\newcommand{\Z}{\mathbb Z}

\newcommand{\N}{\mathbb N}

\newtheorem{remark}{Remark}

\begin{document}
\title[Prob. of visiting a point by critical Branching random walk in $\Z^4$]{An upper bound for the probability of visiting a distant point by critical branching random walk in $\Z^4$}
\author{Qingsan Zhu}
\address{~Department of Mathematics, University of British
Columbia,
Vancouver, BC V6T 1Z2, Canada}
\email{qszhu@math.ubc.ca}
\date{}
\maketitle

\begin{abstract}
In this paper, we study the probability of visiting a distant point $a\in \Z^4$ by critical branching random walk starting from the origin. We prove that this probability is bounded by $1/(|a|^2\log |a|)$ up to a constant.
\end{abstract}

\section{Introduction}

A branching random walk is a discrete-time particle system in $\Z^d$ as the following. Fix a distribution $\mu$ on $\N$, called offspring distribution, and $\theta$ on $\Z^d$, called jump distribution. At time $0$, there is a single particle at the origin $0\in\Z^d$. At each time step $n\in\N$, every particle, say at the site $x\in\Z^d$, gives birth to a random number of offspring (and dies afterwards), according $\mu$; each of these moves independently to a site according to distribution $x+\theta$. If the expectation of $\mu$ is one, we say that the branching random walk is critical.

The asymptotic behavior of the probability of visiting a distant point $a\in \Z^d$ by critical branching random walk in low dimensions ($d\leq 3$) was established recently by Le Gall and Lin (Theorem 7 in \cite{LL14}). Their theorem implies that (under some assumption about the branching random walk)
\begin{equation*}
P(\text{visiting } a)\asymp |a|^{-2} \quad \text{in } \Z^d \quad \text{for } d\leq 3;
\end{equation*}
where we write $f(a)\succeq g(a)$ ($f(a)\preceq g(a)$ resp.) if there exists a positive constant $c$ (only depending on $d$, the offspring distribution $\mu$ and the jump distribution $\theta$ of the critical branching random walk) such that $f(a)\geq cg(a)$ ($f(a)\leq cg(a)$ resp.) and $f(a)\asymp g(a)$ if $f(a)\succeq g(a)$ and $f(a)\preceq g(a)$.

It is also pointed out there (Section 5.1 in \cite{LL14}) that a simple calculation of the first moment and second moment gives
\begin{equation*}
P(\text{visiting } a)\asymp |a|^{2-d} \quad \text{in } \Z^d \quad \text{for } d\geq 5;
\end{equation*}
and
\begin{equation}\label{eq1}
P(\text{visiting } a)\succeq 1/(|a|^{2}\log |a|) \quad \text{in } \Z^4;
\end{equation}

It is believed:
\begin{equation}\label{eq2}
P(\text{visiting } a)\preceq 1/(|a|^{2}\log |a|) \quad \text{in } \Z^4.
\end{equation}

In this paper, we prove \eqref{eq2} under some assumption about $\theta$- we almost assume nothing about $\mu$, as long as $\mu$ is critical and nondegenerate i.e. $\mu(1)<1$. Let us state the main theorem.

\begin{theorem}\label{main}
Let $\mu$ be a critical and nondegenerate probability measure on $\N$ and $\theta$ be a probability measure on $\Z^4$ with zero mean and finite fifth moment (i.e. $E\theta=0$ and $E|\theta|^5<\infty$) , which is also not supported on a strict subgroup of $\Z^4$. If $\mathcal{S}$ is a critical branching random walk with offspring distribution $\mu$ and jump distribution $\theta$, then there exists a positive constant $C$ depending on $\mu$ and $\theta$, such that, for any $a\in\Z^4$ with $|a|$ sufficiently large, we have:
\begin{equation}\label{main_ineq}
P(\mathcal{S}\;\text{visiting }a)\leq C\cdot\frac{1}{|a|^2\log |a|}.
\end{equation}
\end{theorem}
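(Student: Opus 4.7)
The plan is to use the identity
\[
P(V_a) \;=\; \frac{\E[N(a)]}{\E[N(a)\mid V_a]},
\]
where $V_a=\{\mathcal{S}\text{ visits }a\}$ and $N(a)=\sum_n Z_n(a)$ is the total number of visits. By criticality, $\E[Z_n(a)]=p_n(a)$, and the standing assumptions on $\theta$ (zero mean, finite fifth moment, not supported on a proper subgroup) give, via the $\Z^4$ local CLT, $\E[N(a)] = G(a)\asymp|a|^{-2}$. Thus the whole theorem reduces to the conditional lower bound
\[
\E[N(a)\mid V_a] \;\succeq\; \log|a|. \qquad (\ast)
\]
The naive Markov bound $P(V_a)\leq \E[N(a)]$ misses exactly a factor of $\log|a|$, so all the work goes into $(\ast)$.

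To establish $(\ast)$ I would perform a spine/Palm decomposition at the first visit to $a$. Let $\tau=\min\{n:Z_n(a)\geq 1\}$ and pick a particle $v^{\ast}$ at $a$ at time $\tau$. Its ancestral lineage occupies positions $0=X_0, X_1,\ldots, X_\tau=a$ with independent $\theta$-distributed increments. A Kesten--Stigum/Lyons--Peres style calculation identifies the conditional law: each spine particle has a size-biased offspring count $\hat\xi_k$ with law $k\mu(k)$, one child continues the spine, and the remaining $\hat\xi_k-1$ each initiate an independent unconditioned critical BRW. Each off-spine subtree born at position $y$ contributes $G(a-y)$ in expectation to $N(a)$, so
\[
\E\bigl[N(a)\mid V_a,\tau,(X_k)_{k\leq\tau}\bigr] \;\geq\; 1 + c\sum_{k=0}^{\tau} G(a-X_k),
\]
for a constant $c>0$ depending only on $\mu$, after a truncation $\mu\wedge K$ if needed (which is legitimate since $\mu$ is nondegenerate, and the truncated offspring still produces a positive extra-branching constant).

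It remains to lower bound $\sum_{k\leq\tau} G(a-X_k)$. The typical regime is $\tau\asymp|a|^2$, where the spine, after Brownian rescaling justified by $E|\theta|^5<\infty$, is well approximated by a Brownian bridge from $0$ to $a$ over time $|a|^2$; in particular $|a-X_k|\asymp\sqrt{\tau-k}$ for most $k$. Combined with the $\Z^4$ Green function estimate $G(x)\asymp|x|^{-2}$, this yields $G(a-X_k)\asymp(\tau-k)^{-1}$, and summing produces $\asymp\log\tau\asymp\log|a|$. An atypically short $\tau\ll|a|^2$ forces a ballistic lineage and is suppressed exponentially in $|a|$, while $\tau\gg|a|^2$ only enlarges the sum, so the bridge computation survives the averaging over $\tau$.

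The principal obstacle is not any single step above but the quantitative bookkeeping: one must sum the generation-$n$ Palm/spine formula against the distribution of $\tau$ under $V_a$, so that the Brownian-bridge estimate available for each fixed $n\asymp|a|^2$ can be combined with the Gaussian heat-kernel tail $p_n(a)\asymp n^{-2}\exp(-c|a|^2/n)$ to convert the spine lower bound into the unconditional statement $(\ast)$. The finite fifth moment of $\theta$ is used for uniform Gaussian control of bridges, and the truncation of $\mu$ must be chosen so that the excess-branching constant $c$ in the display above stays bounded below uniformly in $|a|$, even when $\mu$ may have infinite variance.
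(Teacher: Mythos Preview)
Your intuition is right and overlaps with the paper's strategy: both trace the ancestral lineage to the first particle at $a$, bound the conditional mean of $N(a)$ from below by a constant times $\sum_k G(a-X_k)$ along that lineage, and then argue that this Green-function sum is typically of order $\log|a|$. Two concrete points, however, separate your sketch from a proof.

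First, the spine you describe is not the size-biased (Kesten) tree. Conditioning on $V_a$ and selecting the \emph{first} (leftmost in depth-first order) particle at $a$ forces every subtree branching to the left of the spine to avoid $a$, while the subtrees to the right are unconditioned. The offspring count at each spine vertex is tilted by this left-avoidance constraint, and the resulting law is \emph{not} $k\mu(k)$. The paper computes this tilt explicitly and shows only that the expected number of \emph{right} siblings at each spine vertex is at least $P(\mu\ge 2)$; this is what plays the role of your constant $c$ and needs no truncation or moment hypothesis on $\mu$.

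Second, and more seriously, your route to $(\ast)$ is circular. You need the law of $(\tau,(X_k))$ under $P(\,\cdot\mid V_a)$, but that conditional law is $p(\gamma)/P(V_a)$ with $P(V_a)$ the unknown. In particular, the claim that short $\tau$ is ``suppressed exponentially'' \emph{under $V_a$-conditioning} cannot be justified without already controlling $P(V_a)$; and for ballistic lineages the Green sum really is $O(1)$, so this bad event cannot be ignored. The paper never computes a conditional expectation. Instead it splits
\[
P(V_a)=P\bigl(V_a,\ \mathcal G<c_1\log|a|\bigr)+P\bigl(V_a,\ \mathcal G\ge c_1\log|a|\bigr),
\qquad \mathcal G:=\textstyle\sum_k G(a-X_k).
\]
On the good event the pointwise bound $E[N\mid\gamma]\ge c\,\mathcal G\ge cc_1\log|a|$ feeds into $E[N]\ge P(\text{good})\cdot cc_1\log|a|$, giving $P(\text{good})\preceq |a|^{-2}/\log|a|$. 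For the bad event the paper proves the \emph{unconditional} estimate $P(V_a,\ \mathcal G<c_1\log|a|)\le |a|^{-2.1}$: first dominate the spine-path law by the free random-walk law (the inequality $p(\gamma)\le s(\gamma)$ is a one-line consequence of criticality), and then invoke a random-walk estimate of Lawler--Limic type, which is exactly where the fifth-moment hypothesis enters. Your Brownian-bridge picture is the correct heuristic for that random-walk estimate, but the reduction to an unconditional random-walk statement---rather than a conditional one under $V_a$---is the step your outline is missing.
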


\begin{remark}
Note that for \eqref{eq1} we need to assume that $\mu$ has finite variance. Hence if $\mu$ has finite variance in addition to the assumptions above, then:
\begin{equation}
P(\mathcal{S}\;\text{visiting }a)\asymp \frac{1}{|a|^2\log |a|}.
\end{equation}
\end{remark}

\begin{remark}
Update: based on the observation and result in this paper, the asymptotics of $P(\mathcal{S}\;\text{visiting }a)$ has been constructed in the forthcoming paper \cite{Z163}.
\end{remark}

\section{Proof of the main theorem}

Before the formal proof, let us talk a bit about the main idea. From simple calculation one can see that the expectation of the times of visiting $a$ is $G(a)=G(0,a)$ ($G$ is the Green function). Our assumptions about $\theta$ can guarantee $G(z)\asymp |z|^{-2}$ (see Theorem 4.3.5 in \cite{LL10}). If conditioned on visiting, the expectation of the visiting times is of order $\log |a|$, then we have \eqref{main_ineq}. In fact, we will prove that this is true with high probability.

Let us introduce some notation. Classically, branching random walk can be regarded as a random function $\mathcal{S}:V(T)\rightarrow \Z^d$, where $T$ is a random plane tree, i.e. rooted ordered tree, and $V(T)$ is the set of all vertices of $T$. In our case $T$ is a Galton-Watson tree with offspring distribution $\mu$. First the root is mapped to the origin under $\mathcal{S}$. Then, we assign to every edge $e$ of $T$ a random variable $Y_e$ according to $\theta$ independently and $\mathcal{S}(v)$, for any $v\in V(T)$ is the sum of the random variables $Y_e$ over all edges $e$ belonging to the unique simple path from the root to $u$ in the tree. Since we have an order $\prec$ in the offspring of each vertex in $T$, we have the classical order (according to Depth-first search) on $V(T)$ as follows. Let $v$ and $v'$ are different vertices, and $\omega=(v_0,v_1,\dots, v_m)$ and $\omega'=(v'_0,v'_1,\dots, v'_n)$ be the unique simple paths in the tree from the root (hence $v_0=v'_0$ is the root) to $v$ and $v'$ respectively. We say that $v$ is on the left of $v'$, i.e. $v\prec v'$ if either $(v_0,v_1,\dots, v_m)$ is a subsequence of $(v'_0,v'_1,\dots, v'_n)$ or $v_t\prec v'_t$, where $t=\min\{k: v_k\neq v'_k\}$.

For any branching random walk sample $S:V(T)\rightarrow \Z^d$ that visits $a$, $V_a:=\{v\in V(T):S(v)=a\}$ is not empty. Let $u$ be the leftmost point in $V_a$ and $(v_0,v_1,\dots,v_k)$ be the unique simple path in $T$ from the root to $u$. Then $(S(v_0),S(v_1),\dots,S(v_k))$ is a path in $\Z^4$ from the origin to $a$. We denote this path by $\tilde{\gamma}(S)$. Let $N$ be the number of visiting times of $a$. For any $\gamma$ be a path from the origin to $a$, define $p(\gamma)=P(N>0, \tilde{\gamma}(\mathcal{S})=\gamma )$ and $e(\gamma)=E(N|N>0,\tilde{\gamma}(\mathcal{S})=\gamma)$. Note that $N>0$ iff $\mathcal{S}$ visits $a$. For any path $\gamma=(z_0,\dots, z_n)$ in $\Z^4$, define $g(\gamma)=\sum_{i=0}^nG(z_i,a)=\sum_{i=0}^nG(a-z_i)$, where $G$ is the Green function respect to distribution $\theta$. Let $\mathcal{G}=1\{\mathcal{S}\; \text{visiting }a\}\cdot g(\tilde{\gamma}(\mathcal{S}))$. The following lemmas are the key ingredients for the main theorem.

\begin{lemma}\label{key_1}
For any $\gamma$ is a path from the origin to $a$ such that $p(\gamma)>0$, we have:
\begin{equation}
e(\gamma)\geq P(\mu\geq 2)g(\gamma).
\end{equation}
\end{lemma}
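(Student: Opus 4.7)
The plan is to condition on $A:=\{\tilde\gamma(\mathcal{S})=\gamma\}$, which picks out a distinguished spine $v_0,\dots,v_n$ in the tree $T$, and to exploit the independence of the ``right-of-spine'' subtrees. Let $K_i$ be the number of children of $v_i$ and, for $i<n$, let $J_i\in\{1,\dots,K_i\}$ denote the index of $v_{i+1}$ among them. Because $v_n=u$ must be the \emph{leftmost} visitor of $a$, the subtrees rooted at the $J_i-1$ left children of each $v_i$ (for $i<n$) are forced to miss $a$, while the $K_i-J_i$ right children (and all $K_n$ children of $v_n$) initiate completely unconditioned BRW subtrees. Since each such right child sits at position $z_i+Y_c$ with $Y_c\sim\theta$ independent, the expected number of visits to $a$ in its subtree is $\sum_y\theta(y)G(a-z_i-y)=G(a-z_i)-1\{z_i=a\}$, which equals $G(a-z_i)$ for $i<n$ (since then $z_i\ne a$) and $G(0)-1$ for $i=n$.

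Decomposing $N=1+\sum_{i=0}^{n}R_i$ (the $1$ accounts for $v_n$ itself; on $A$ all non-spine contributions come from right subtrees) and letting $r_i$ be the number of right children of $v_i$, we get
\[
e(\gamma)=G(0)+\sum_{i=0}^{n-1}G(a-z_i)\,E[r_i\mid A],
\]
using $E[r_n\mid A]=E[K_n]=1$ (the distribution of $K_n$ is unaffected by $A$). Since $g(\gamma)=G(0)+\sum_{i<n}G(a-z_i)$, it suffices to prove $E[r_i\mid A]\ge p:=P(\mu\ge 2)$ for each $i<n$, for then $e(\gamma)-p\,g(\gamma)\ge(1-p)G(0)\ge 0$. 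A direct computation from $P(K_i=k,J_i=j\mid A)\propto\mu(k)\alpha_i^{j-1}$, where $\alpha_i$ is the probability that a single off-spine child's subtree avoids $a$, gives $E[r_i\mid A]=\phi(\alpha_i)$ with
\[
\phi(\alpha):=\frac{G_\mu(\alpha)-\alpha}{(1-\alpha)(1-G_\mu(\alpha))}=\frac{1}{1-G_\mu(\alpha)}-\frac{1}{1-\alpha},
\]
where $G_\mu$ is the probability generating function of $\mu$.

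The remaining, and main, step is the analytic inequality $\phi(\alpha)\ge p$ for all $\alpha\in[0,1)$. Multiplying through and using the expansions $(G_\mu(\alpha)-\alpha)/(1-\alpha)^2=\sum_{l\ge 0}c_l\alpha^l$ with $c_l=E[(\mu-l-1)_+]$, and $(1-G_\mu(\alpha))/(1-\alpha)=\sum_{l\ge 0}a_l\alpha^l$ with $a_l=P(\mu>l)$, the bound reduces to $\sum_{l\ge 0}b_l\alpha^l\ge 0$ where $b_l=c_l-p\,a_l$. Individual $b_l$ may be negative, but Abel summation reduces the claim to non-negativity of the partial sums $B_l=\sum_{i=0}^l b_i$. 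Introducing an auxiliary random variable $Y$ with $P(Y=k)=a_k$ (a genuine probability distribution because $E\mu=1$ forces $\sum_k a_k=1$), a short computation gives $B_l=E[Y\wedge(l+1)]-p\,P(Y\le l)$. The estimates $E[Y\wedge(l+1)]\ge E[Y\wedge 1]=P(Y\ge 1)=\mu(0)$ and $P(Y\le l)\le 1$, together with the elementary consequence of criticality
\[
1=E\mu\ge\mu(1)+2P(\mu\ge 2)=(1-\mu(0)-p)+2p\;\Longrightarrow\;\mu(0)\ge p,
\]
yield $B_l\ge\mu(0)-p\ge 0$, completing the proof. The main obstacle is this analytic inequality: no term-wise comparison of $c_l$ and $pa_l$ works, and the telescoping provided by Abel summation (or an equivalent rearrangement) seems essential to extract cancellation.
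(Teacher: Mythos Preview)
Your proof is correct and the overall architecture---conditioning on the spine, decomposing $N$ into the spine visit plus contributions from right-of-spine subtrees, and reducing everything to the inequality $E[r_i\mid A]\ge P(\mu\ge 2)$---is exactly the paper's. Where you diverge is in proving that inequality. You compute the exact value $E[r_i\mid A]=\phi(\alpha_i)$ and then establish $\phi(\alpha)\ge p$ by expanding both sides in power series, applying Abel summation, and reducing to $\mu(0)\ge P(\mu\ge 2)$ via the auxiliary size-biased variable $Y$. This is correct (and the computation $c_i=P(Y>i)$, $\sum_{i\le l}c_i=E[Y\wedge(l+1)]$ is a nice identity), but it is considerably more work than needed. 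The paper simply bounds the ratio
\[
E[r_i\mid A]=\frac{\sum_{l\ge0,\,m\ge0} m\,\mu(l+m+1)\,\alpha^l}{\sum_{l\ge0,\,m\ge0}\mu(l+m+1)\,\alpha^l}
\]
by discarding all $l\ge1$ terms in the numerator (giving $\sum_{m\ge1}\mu(m+1)=P(\mu\ge2)$) and replacing $\alpha$ by $1$ in the denominator (giving $\sum_{n\ge1}n\mu(n)=E\mu=1$). So the paper's argument is a two-line estimate, whereas yours extracts the closed form first and then proves an analytic inequality; both land on the same numerical consequence $\mu(0)\ge P(\mu\ge2)$ of criticality, but the paper never needs to name it.
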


\begin{lemma}\label{key_2}
There exists positive constants $c_1,c_2$, such that for all $a\in\Z^4$ with $|a|$ sufficiently large, we have
\begin{equation}
P(0<\mathcal{G}\leq c_1\log |a|)\leq c_2/|a|^{2.1}.
\end{equation}
\end{lemma}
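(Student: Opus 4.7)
The strategy is to reduce Lemma \ref{key_2} to a pure random-walk estimate via a first-moment (many-to-one) argument, and then to prove that random-walk estimate using a scale-by-scale concentration analysis. First, for any path $\gamma=(0,z_1,\ldots,z_n=a)$ with $z_i\neq a$ for $0<i<n$ (which the leftmost condition forces), the first-moment formula for critical branching random walks gives that the expected number of vertices in $V(T)$ whose spine of labels traces $\gamma$ equals $\prod_{i=0}^{n-1}\theta(z_{i+1}-z_i)$. Hence $p(\gamma)\leq\prod_i\theta(z_{i+1}-z_i)$, and summing over such paths with $g(\gamma)\leq c_1\log|a|$ yields
\begin{equation*}
P(0<\mathcal{G}\leq c_1\log|a|) \;\leq\; P_0\bigl(\tau_a<\infty,\ \textstyle\sum_{i=0}^{\tau_a} G(a-X_i)\leq c_1\log|a|\bigr),
\end{equation*}
where $(X_i)$ is a random walk with step distribution $\theta$ and $\tau_a$ is its first hitting time of $a$. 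Since $P_0(\tau_a<\infty)=G(a)/G(0)\asymp|a|^{-2}$, via Doob's $h$-transform with $h(x)=G(a-x)/G(0)$ it suffices to show $P^h_0(g\leq c_1\log|a|)\leq C|a|^{-0.1}$ under the conditioned-to-hit law $P^h$.

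To prove the concentration, partition $\Z^4$ into annuli $A_k=\{x:2^k\leq|a-x|<2^{k+1}\}$ for $k=0,\ldots,K:=\lfloor\log_2|a|\rfloor$, and decompose $g\asymp\sum_k 2^{-2k}L_k$, where $L_k$ is the local time on $A_k$ up to $\tau_a$. A direct computation using the $h$-transformed killed Green function gives $E^h_0[2^{-2k}L_k]\asymp 1$ for each $k$, so $E^h_0[g]\asymp K\asymp\log|a|$. Applying the strong Markov property at the first hitting times of the nested balls $B(a,2^k)$, together with the scaling self-similarity of the walk (using the sharp Green-function asymptotics provided by the fifth-moment hypothesis on $\theta$), one expects that the per-scale contributions $(2^{-2k}L_k)_k$ have bounded variances and are asymptotically independent. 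An exponential-moment bound of the form $E^h_0[e^{-\lambda g}]\leq e^{-c\lambda K}$ for some $c>0$ and suitably small $\lambda>0$, combined with Markov's inequality, then gives
\begin{equation*}
P^h_0(g\leq c_1\log|a|) \;\leq\; e^{\lambda c_1\log|a|-c\lambda K/\log 2} \;\leq\; |a|^{-c'}
\end{equation*}
for some $c'>0$ that can be made larger than $0.1$ by choosing $c_1$ sufficiently small; combined with the reduction above, this yields the desired $C|a|^{-2.1}$.

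The main technical obstacle is making the scale-by-scale independence rigorous. Near $a$, the $h$-transform strongly biases the walk; near the outer boundary (at distance $\sim|a|$ from $a$) the conditioning is weak; and in between there is a mixing region whose analysis requires delicate Green-function and excursion-time estimates. Establishing sharp uniform moment bounds on $L_k$ across all scales $k\in\{0,\ldots,K\}$, in a form that allows the exponential-moment concentration to go through, is where the finer properties of the random walk enabled by the finite fifth moment are needed.
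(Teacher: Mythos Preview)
Your reduction step is correct and matches the paper exactly: the many-to-one bound $p(\gamma)\le s(\gamma)=\prod_i\theta(z_{i+1}-z_i)$ reduces the lemma to the random-walk statement
\[
P_0\Bigl(\tau_a<\infty,\ \sum_{i=0}^{\tau_a}G(a-S_i)\le c_1\log|a|\Bigr)\le c_2|a|^{-2.1}.
\]
From this point on, however, the paper proceeds differently and more simply than you do. Instead of conditioning on $\{\tau_a<\infty\}$ via the $h$-transform and analysing local times in annuli around $a$, the paper \emph{reverses} the walk (replacing $G(\,\cdot\,,a)$ by $G(\,\cdot\,)$) and then drops the hitting condition altogether in favour of an exit-time statement:
\[
P^{RW}\Bigl(\sum_{i=0}^{\tau_n}G(S_i)\le c_1\log n\Bigr)\le c_2 n^{-2.1},\qquad \tau_n=\min\{k:|S_k|\ge n\}.
\]
This is now an \emph{unconditional} random-walk estimate about the accumulated Green function up to the exit of a large ball, and it is essentially Lemma~10.1.2(a) of Lawler--Limic. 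The only adaptation needed is to replace the finite-range assumption there by the fifth-moment hypothesis: one sets $\beta=0.9$, $N=\lfloor n^\beta\rfloor$, bounds the probability of a jump larger than $N$ in the first $2n^2$ steps by $n^2\cdot N^{-5}\preceq n^{-2.1}$, and on the complementary event runs the Lawler--Limic stopping-time scheme $\xi^i=\min\{k:|S_k|\ge 2^iN\}\wedge(\xi^{i-1}+(2^iN)^2)$.

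The gap in your proposal is precisely the part you flag yourself: under the $h$-transformed law the per-scale contributions $2^{-2k}L_k$ are not independent, and establishing an exponential-moment bound $E^h_0[e^{-\lambda g}]\le e^{-c\lambda K}$ uniformly across the strongly biased region near $a$, the weakly biased region near $0$, and the intermediate scales is a genuine piece of work that you have not carried out. The paper's reversal trick sidesteps this entirely: once the problem is recast as an exit-time statement for the unconditioned walk started at the origin, the scale-by-scale Markov-property argument is standard and already in the literature. Your route may well be completable, but as written it is a heuristic outline, whereas the paper's route reduces the problem to a known lemma with a short modification.
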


We postpone the proof of these two lemmas and start the proof for Theorem \ref{main}.
Since $\mu$ is critical, we have:
\begin{equation*}
EN=G(0,a)\asymp |a|^{-2}.
\end{equation*}
By Lemma \ref{key_1}, we have:
\begin{align*}
|a|^{-2}&\asymp EN\geq P(\mathcal{G}\geq c_1\log |a|)E(N|\mathcal{G}\geq c_1\log |a|)\\
&\geq P(\mathcal{G}\geq c_1\log |a|)P(\mu\geq2) c_1 \log |a|\\
&\succeq P(\mathcal{G}\geq c_1\log |a|)\log |a|.
\end{align*}
Therefore:
\begin{equation*}
P(\mathcal{G}\geq c_1\log |a|)\preceq 1/(|a|^2\log |a|).
\end{equation*}
Then we have:
\begin{align*}
P(\mathcal{S}\;\text{visiting }a)&=P(\mathcal{G}>0)\\
&=P(0<\mathcal{G}<c_1\log |a|)+P(\mathcal{G}\geq c_1 \log|a|)\\
&\preceq 1/|a|^{2.1}+1/(|a|^2\log |a|)\\
&\preceq 1/(|a|^2\log |a|).
\end{align*}

\begin{proof}[Proof of Lemma \ref{key_1}]
Fix a $\gamma=(z_0,z_1,\dots,z_k)$ such that $p(\gamma)>0$. For any branching random walk sample $S$ such that $\tilde{\gamma}(S)=\gamma$, write $a_i$ ($b_i$ respectively) for the number of the brothers of $z_i$ on the left of $z_i$ (on the right respectively), for $i=1,\dots,k$. From the tree structure, one can easily see that, for any $l_1,\dots,l_k,$ $m_1,\dots,m_k\in\N$, we have
\begin{multline}\label{f1}
P(N>0, \tilde{\gamma}(\mathcal{S})=\gamma; a_i=l_i, b_i=m_i, \text{for }i=1,\dots,k)\\
=s(\gamma)\prod_{i=1}^{k}\left(P(\mu=l_i+m_i+1)(q(z_{i-1}-a))^{l_i}\right),
\end{multline}
where $s(\gamma)$ is the probability weight for random walk respect to $\theta$, i.e. $s(\gamma)=\prod_{i=1}^k\theta(z_i-z_{i-1})$ and
$q(z)$ is the probability that the branching random walk does not visit $z$ conditioned on the initial particle having only one child.

Conditioned on the event on \eqref{f1}, the expectation of $N$ is:
\begin{equation*}
G(0)+\sum_{i=1}^k m_iG(a-z_{i-1}).
\end{equation*}
Recall that $g(\gamma)=\sum_{i=0}^{k}G(a-z_i)=G(0)+\sum_{i=1}^kG(a-z_{i-1})$. Thus it suffices to show:
\begin{equation*}\label{f2}
E(b_i|N>0, \tilde{\gamma}(\mathcal{S})=\gamma)\geq P(\mu\geq2).
\end{equation*}
A straight computation using \eqref{f1} gives:
\begin{align*}
E(b_i|N>0, \tilde{\gamma}(\mathcal{S})=\gamma)&=\frac{\sum_{l\geq0,m\geq0}mP(\mu=l+m+1)(q(z_{i-1}-a))^l}
{\sum_{l\geq0,m\geq0}P(\mu=l+m+1)(q(z_{i-1}-a))^l}\\
&\geq\frac{\sum_{l=0,m\geq1}1\cdot P(\mu=l+m+1)(q(z_{i-1}-a))^l}
{\sum_{l\geq0,m\geq0}P(\mu=l+m+1)}\\
&=\frac{\sum_{m\geq1}P(\mu=m+1)}
{\sum_{j\geq1}jP(\mu=j)}\\
&=\frac{P(\mu\geq2)}{E\mu}\\
&=P(\mu\geq2).\\
\end{align*}
\end{proof}

\begin{proof}[Proof of Lemma \ref{key_2}]
Straight calculation using \eqref{f1} gives:
\begin{align*}
p(\gamma)&=s(\gamma)\prod_{i=1}^{k}(\sum_{l_i\geq0}P(\mu\geq l_i+1)(q(z_{i-1}-a))^{l_i})\\
&\leq s(\gamma)\prod_{i=1}^{k}(\sum_{l_i\geq0}P(\mu\geq l_i+1))\\
&=s(\gamma)\prod_{i=1}^{k}(E\mu)\\
&=s(\gamma).
\end{align*}
Hence, we have:
\begin{align*}
P(0<\mathcal{G}\leq c_1 \log |a|)&=\sum_{\gamma:0\rightarrow a, g(\gamma)\leq c_1 \log |a|}p(\gamma)\\
&\leq\sum_{\gamma:0\rightarrow a, g(\gamma)\leq c_1 \log |a|}s(\gamma)\\
&=P^{RW}(0<\mathcal{G}\leq c_1 \log |a|),\\
\end{align*}
where $P^{RW}$ is the probability about Random Walk with step distribution $\theta$.
Then Lemma \ref{key_2} can be implied by:

\begin{prop}
There exist $c_1,c_2$ such that for $a\in\Z^4$ with $|a|$ sufficiently large,
\begin{equation*}
P^{RW}(\tau_a<\infty, \sum_{i=0}^{\tau_a}G(S_i)\leq c_1 \log |a|)\leq c_2 |a|^{-2.1},
\end{equation*}
where $(S_i)_{i\in\N}$ is Random Walk starting from $0$ with distribution $\theta$ and $\tau_a$ is the hitting time for $a$.
\end{prop}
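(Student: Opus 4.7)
The plan is a dyadic-scale decomposition around $a$. For $j_0 \leq j \leq J := \lfloor \log_2 |a|\rfloor - 10$ with $j_0$ a large fixed constant, set $R_j := 2^j$ and $T_j := \inf\{i \geq 0 : |S_i - a| \leq R_j\}$, so that $T_J \leq T_{J-1} \leq \cdots \leq T_{j_0} \leq \tau_a$ on $\{\tau_a < \infty\}$. Define $N_j := \#\{i \in [T_j, T_{j-1}) : |S_i - a| \leq 3 R_j\}$, the number of time steps spent within distance $3R_j$ of $a$ between the first entries into $B(a,R_j)$ and $B(a,R_j/2)$. Since $G(a - z) \asymp R_j^{-2}$ for $|z - a| \leq 3 R_j$ by Theorem 4.3.5 of \cite{LL10}, we have
$$\sum_{i=0}^{\tau_a} G(a - S_i) \;\gtrsim\; \sum_{j=j_0}^{J} R_j^{-2}\, N_j,$$
and it suffices to show that conditional on $\{\tau_a < \infty\}$, with probability at least $1 - |a|^{-0.1}$, at least a positive fraction of the $J - j_0 \asymp \log|a|$ indices $j$ satisfy $N_j \geq \alpha R_j^2$ for a fixed $\alpha > 0$.

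For each single scale $j$, a standard invariance-principle / exit-time estimate (valid under the fifth-moment and non-degeneracy hypotheses on $\theta$) gives a uniform probability $\beta_0 > 0$ that the walk, starting from a point with $|\,\cdot - a| \in [R_j/2, R_j]$, remains in $B(a, 3 R_j) \setminus B(a, R_j/2)$ for at least $\alpha R_j^2$ steps; the moment condition also ensures $|S_{T_j} - a| \in [R_j/2, R_j]$ except with polynomially small failure probability absorbable into the final error. To transfer this bound under the conditioning on $\{\tau_a < \infty\}$, write $E_j := \{N_j \geq \alpha R_j^2\}$ and apply the strong Markov property at $T_{j-1}$ to obtain
$$P(E_j \mid \mathcal{F}_{T_j},\, \tau_a < \infty) \;=\; \frac{E_{S_{T_j}}\!\bigl[\mathbf{1}_{E_j}\, P_{S_{T_{j-1}}}(\tau_a < \infty)\bigr]}{E_{S_{T_j}}\!\bigl[P_{S_{T_{j-1}}}(\tau_a < \infty)\bigr]}.$$
By $P_z(\tau_a < \infty) = G(z-a)/G(0) \asymp |z-a|^{-2}$ (again by \cite{LL10}), the weight $P_{S_{T_{j-1}}}(\tau_a<\infty)$ is pinched between constant multiples of $R_{j-1}^{-2}$, and the above ratio is bounded below by a constant times $P_{S_{T_j}}(E_j) \geq \beta_0$. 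Thus $P(E_j \mid \mathcal{F}_{T_j}, \tau_a<\infty) \geq \beta$ for some $\beta > 0$ independent of $a$ and $j$.

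Since $E_j \in \mathcal{F}_{T_{j-1}}$ and $T_J \leq T_{J-1} \leq \cdots \leq T_{j_0}$, iterating the previous conditional bound over $j = J, J-1, \dots, j_0$ shows that, under $P(\,\cdot \mid \tau_a<\infty)$, the variable $\sum_j \mathbf{1}_{E_j}$ stochastically dominates a Binomial$(J-j_0,\beta)$. A Chernoff bound yields
$$P\Big(\sum_{j=j_0}^{J} \mathbf{1}_{E_j} \leq \tfrac{\beta}{2}(J - j_0) \,\Big|\, \tau_a < \infty\Big) \leq e^{-c(J - j_0)} \leq |a|^{-0.1}$$
for a suitable choice of constants. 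On the complement $\sum_j R_j^{-2} N_j \geq \alpha\beta(J-j_0)/2 \geq c_1 \log|a|$, and multiplying by $P(\tau_a < \infty) \asymp |a|^{-2}$ produces the claimed $c_2 |a|^{-2.1}$.

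The main obstacle is the uniform Harnack-type ratio bound on $P_z(\tau_a<\infty)$ at every scale $R_{j_0}, \dots, R_J$, which drives both the conditional single-scale estimate and the independence across scales. For simple random walk this is essentially classical; under the present assumptions it must be extracted from the uniform Green-function asymptotics of \cite{LL10}, together with a technical treatment of the overshoot at each first-entry time $T_j$ via the fifth-moment hypothesis. Once this comparison is established uniformly in $j$, the dyadic Chernoff argument sketched above is routine.
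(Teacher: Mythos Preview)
Your approach is sound and reaches the same conclusion, but via a genuinely different route from the paper. Both arguments decompose into dyadic scales and show that at each scale the walk accumulates order-one Green mass, yielding $\asymp\log|a|$ such scales; the difference lies in how the conditioning on $\{\tau_a<\infty\}$ is handled. You keep the conditioning and work with shrinking balls centered at $a$, neutralizing the conditional measure at each scale by a Harnack comparison $P_z(\tau_a<\infty)\asymp|z-a|^{-2}$ together with overshoot control from the fifth moment. The paper instead \emph{removes} the conditioning altogether: it reverses the path (turning $\sum_i G(a-S_i)$ into $\sum_i G(\tilde S_i)$ for a walk $\tilde S$ started at the origin) and then observes that, since this walk must travel distance $|a|$, it suffices to bound the unconditional quantity $P\bigl(\sum_{i\le\tau_n}G(S_i)\le c_1\log n\bigr)$ for the first exit time $\tau_n$ of the ball of radius $n=|a|$. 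From there the paper simply follows the outward-dyadic argument of Lemma~10.1.2(a) in \cite{LL10}, replacing the bounded-range hypothesis there by a truncation of large jumps (the fifth moment gives $P(A^c)\preceq n^{-2.1}$ for the event $A$ that no step among the first $2n^2$ exceeds $n^{0.9}$). Your route is more self-contained but must carry the Harnack and overshoot technicalities you flag at the end; the paper's reversal trick sidesteps both and reduces to an essentially off-the-shelf estimate.
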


Note that we have changed $G(\cdot, a)$ to $G(\cdot)$. We can do this by considering the reversed random walk.

This proposition is an adjusted version of Lemma 10.1.2 (a) in \cite{LL10}. It is assumed there that $\theta$ has finite support which is much stronger than our case, though its conclusion is also much stronger than ours. The argument is similar to the one there with small adjustments. We give an outline here. It suffices to prove:
\begin{equation}\label{f3}
P^{RW}(\sum_{i=0}^{\tau_n}G(S_i)\leq c_1 \log n)\leq c_2 n^{-2.1},
\end{equation}
where $\tau_n=\min\{k\geq0: |S_k|\geq n\}$.

Fix $\beta=0.9\in(4/5,1)$ and let $N=\lfloor n^\beta\rfloor$. Since $\theta$ has finite fifth moment, we have $P(|\theta|>m)\preceq m^{-5}$. Let $A$ be the event that $|X_i|\leq  N$, for $i=1,2,\dots,2n^2\wedge\tau_n$ (where $X_i=S_i-S_{i-1}$ and $i\wedge j=\min\{i,j\}$). Then $P(A^c)\preceq n^2/n^{5\beta}\leq n^{-2.1}$. When $A$ happens, the range of the random walk is bounded by $N$ for the first $2n^2$ steps. Since only first $2n^2$ steps is bounded, we need to change the stopping times there a little. Let $\xi^0=0$, $\xi^i=\min\{k:|S_k|\geq 2^{i}N\} \wedge (\xi^{i-1}+(2^iN)^2)$, for $i=1,2,\dots, L$, where $L=\max\{k:2^kN\leq n\}\asymp \log n$. Now \eqref{f3} can be obtained by following the argument of the proof of Lemma 10.1.2 (a) in \cite{LL10} .
\end{proof}

\section*{Acknowledgement}

The author would like to thank his advisor, Professor Omer Angel for inspiring discussions.


\begin{thebibliography}{AAAAAa}

\bibitem[1]{BC}
I. Benjamini and N. Curien.
Recurrence of the $\Z^d$-valued infinite snake via unimodularity.
{\em Electron. Commun. Probab.}, 17, no. 1, 10pp, 2012.

\bibitem[2]{LL10}
G. F. Lawler and V. Limic.
Random walk: a modern introduction.
Cambridge University Press, 2010.

\bibitem[3]{LL141}
J.-F. Le Gall and S. Lin.
The range of tree-indexed random walk.
To appear in J. Inst. Math. Jussieu.


\bibitem[4]{LL14}
J.-F. Le Gall and S. Lin.
The range of tree-indexed random walk in low dimensions.
To appear in Ann. Probab.

\bibitem[5]{Z163}
Q. Zhu.
On the critical branching random walk III: the critical dimension.
In preparation.



\end{thebibliography}
\end{document}